\newcommand{\E}{\mathbf{E}}
\newtheorem{theorem}{Theorem}
\crefname{theorem}{Theorem}{Theorems}
\newtheorem{thm}[theorem]{Theorem}
\newtheorem{lem}[theorem]{Lemma}
\newtheorem{prop}[theorem]{Proposition}
\theoremstyle{definition}
\title{Embedding distance graphs in finite field vector spaces}
\author{Alex Iosevich \and Hans Parshall}
\begin{document}

\begin{abstract}
We show that large subsets of vector spaces over finite fields determine certain point configurations with prescribed distance structure.  More specifically, we consider the complete graph with vertices as the points of $A \subseteq \mathbf{F}_q^d$ and edges assigned the algebraic distance between pairs of vertices.  We prove nontrivial results on locating specified subgraphs of maximum vertex degree at most $t$ in dimensions $d \geq 2t$. 
\end{abstract}

\maketitle

\section{Introduction}

The famous Erd\H{o}s and Falconer distance problems aim to quantify the extent to which large sets must determine many distances.  A near optimal result on the planar Erd\H{o}s distinct distance problem was established by Guth and Katz \cite{guth15}, who showed that every set of $N$ points in $\mathbf{R}^2$ determines at least a constant multiple of $\frac{N}{\log(N)}$ distinct distances.  However, even in the Euclidean plane, the Falconer distance problem continues to present a gap in our understanding of distance sets. Falconer~\cite{falconer85} proved that when the Hausdorff dimension of compact $A \subseteq \mathbf{R}^d$ is large, specifically $\dim(A) >\frac{d+1}{2}$, the distance set $\{|x - y| : x,y \in A\} \subseteq \mathbf{R}$ must have positive Lebesgue measure.  The state of the art by Wolff~\cite{wolff99} in dimension $d = 2$ and Erdo\u{g}an~\cite{erdogan05} in dimensions $d \geq 3$ establishes the same conclusion with the weaker hypothesis of $\dim(A) > \frac{d}{2}+\frac{1}{3}$.  Falconer conjectures that this can be relaxed further to $\dim(A)>\frac{d}{2}$, which would be optimal. Significant progress over these exponents has been achieved recently by Orponen~\cite{orponen17}, Shmerkin~\cite{shmerkin17} and Keleti and Shmerkin~\cite{keleti18} in the setting of sets that are close to being Ahlfors-David regular. 

More generally, many authors have considered the related problem of locating point configurations with prescribed metric structure within subsets $A \subseteq \mathbf{R}^d$ of large Hausdorff dimension.  For instance, one result in this direction of Bennett, the first listed author and Taylor~\cite[Theorem 1.7]{bennett16-chains} implies:

\begin{theorem}[\cite{bennett16-chains}]\label{chainsTheorem} For $d \geq 2$, let $A \subseteq \mathbf{R}^d$ be compact with $\dim(A)>\frac{d+1}{2}$.  For any $k \in \mathbf{N}$, there exists an open interval $(a,b) \subset \mathbf{R}$ where for every $\lambda \in (a,b)$, there exist distinct $x_0, \ldots, x_k \in A$ with $|x_{j + 1} - x_j| = \lambda$ for $0 \leq j < k$.
\end{theorem}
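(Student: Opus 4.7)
The proof follows a standard Fourier-analytic template. By Frostman's lemma, fix a compactly supported Borel probability measure $\mu$ on $A$ obeying $\mu(B(x,r)) \leq C r^s$ for some $s \in ((d+1)/2, \dim(A))$, and write $\sigma_\lambda$ for surface measure on the sphere of radius $\lambda$ in $\mathbf{R}^d$. Define the chain-counting quantity
\[
\Lambda_k(\lambda) = \int \prod_{j=0}^{k-1} d\sigma_\lambda(x_{j+1}-x_j)\, d\mu(x_0)\cdots d\mu(x_k),
\]
interpreted via smooth approximations $\sigma_\lambda^\epsilon$ and taking $\epsilon \to 0$. The plan is to show $\Lambda_k$ is a continuous function of $\lambda$ that is strictly positive on an open interval $(a,b)$; a compactness/limit argument then produces the desired $\lambda$-chains in $A$, with distinctness of vertices automatic from the non-atomicity of $\mu$.

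The core analytic input is the uniform bound $\|\mu * \sigma_\lambda\|_\infty \leq C(\lambda)$. By the stationary-phase decay $|\widehat{\sigma_\lambda}(\xi)| \lesssim_\lambda (1+|\xi|)^{-(d-1)/2}$ and Plancherel, this reduces to the finiteness of Mattila's integral
\[
\int_{\mathbf{R}^d} |\widehat{\mu}(\xi)|^2\, (1+|\xi|)^{-(d-1)/2}\, d\xi,
\]
which holds precisely when $\mu$ has finite $s$-energy with $s > (d+1)/2$. Introduce the chain iterates $\Phi_0 \equiv 1$ and $\Phi_{j+1} := (\Phi_j\,\mu) * \sigma_\lambda$ on the smooth approximants; the $L^\infty$ bound yields $\|\Phi_j\|_\infty \leq C(\lambda)^j$ uniformly in $\epsilon$, and unwinding gives $\Lambda_k = \int \Phi_k\, d\mu$. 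A parallel Lipschitz-in-$\lambda$ estimate provides equicontinuity, so the smooth approximants converge uniformly on compact subintervals of $(0,\infty)$ to a continuous limit $\Lambda_k$.

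For the strict positivity of $\Lambda_k$, proceed by induction on $k$. The base case $k=1$ is essentially Mattila's theorem: direct computation in polar coordinates gives $\int \Lambda_1(\lambda)\, d\lambda = \mu(A)^2 > 0$, and continuity together with compact support of $\Lambda_1$ forces an open interval $(a,b)$ on which $\Lambda_1 > 0$. For the inductive step, positivity of $\Phi_{k-1}$ on a $\mu$-positive set propagates to $\Phi_k$: since the sphere of radius $\lambda$ about a $\mu$-generic point meets $\operatorname{supp}(\mu)$ on a set of positive $\sigma_\lambda$-measure once $\Lambda_1(\lambda) > 0$, the recursion $\Phi_k = (\Phi_{k-1}\,\mu) * \sigma_\lambda$ combined with Cauchy-Schwarz yields a quantitative lower bound for $\Lambda_k(\lambda)$ in terms of $\Lambda_{k-1}(\lambda)$ and $\Lambda_1(\lambda)$, maintaining positivity on a (possibly smaller) open subinterval of $(a,b)$.

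The main obstacle is the uniform $L^\infty$ spherical-average bound: it is sharp at the Falconer exponent $(d+1)/2$ and is what dictates the dimensional hypothesis of the theorem. The subsequent propagation of positivity through the chain structure, while nontrivial, uses softer measure-theoretic arguments once this analytic input is secured.
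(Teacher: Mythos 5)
First, note that the paper does not prove this statement at all: it is quoted as Theorem~1.7 of \cite{bennett16-chains} purely as motivation, so there is no in-paper proof to compare against. Your overall architecture (Frostman measure, the multilinear chain form $\Lambda_k$ built from smoothed spherical measures, continuity in $\lambda$, positivity on an interval seeded by the $k=1$ Falconer/Mattila case, and an induction on the chain length) is indeed the shape of the argument in \cite{bennett16-chains}. However, there is a genuine gap at the single most important step.

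The claimed core estimate $\|\mu * \sigma_\lambda\|_\infty \leq C(\lambda)$ is false at the exponent $\frac{d+1}{2}$, and the derivation you offer does not produce it. A Frostman measure of exponent $s<d$ can charge the annulus $\{\lambda \le |x-y| \le \lambda+\epsilon\}$ about a point $x$ with mass as large as $\epsilon^{\,s-d+1} \gg \epsilon$ (e.g.\ in the plane, a product-like $1.6$-dimensional set whose radial projection is a $0.6$-dimensional Cantor set accumulating at radius $\lambda$), so $\mu*\sigma_\lambda^\epsilon$ blows up pointwise as $\epsilon\to 0$. Moreover, ``Plancherel plus finiteness of $\int |\widehat{\mu}|^2(1+|\xi|)^{-(d-1)/2}\,d\xi$'' is an $L^2$-type statement and cannot yield an $L^\infty$ bound; for that you would need $\int |\widehat{\mu}(\xi)|(1+|\xi|)^{-(d-1)/2}\,d\xi<\infty$, which requires far more than $\dim(A)>\frac{d+1}{2}$. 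Consequently the iteration $\|\Phi_j\|_\infty \le C(\lambda)^j$ has no foundation. The correct analytic input, and the one used in \cite{bennett16-chains}, is the $L^2(\mu)\to L^2(\mu)$ bound for the operator $T_\lambda f = \sigma_\lambda * (f\,d\mu)$: one writes $\langle T_\lambda f, g\rangle_{L^2(\mu)} = \int \widehat{\sigma_\lambda}\,\widehat{f d\mu}\,\overline{\widehat{g d\mu}}$, uses the stationary-phase decay of $\widehat{\sigma_\lambda}$, and controls $\int |\widehat{f d\mu}|^2(1+|\xi|)^{-(d-1)/2}d\xi \lesssim \|f\|_{L^2(\mu)}^2$ via a Schur test on the kernel $|x-y|^{-(d+1)/2}$, which is where $s>\frac{d+1}{2}$ enters. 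The chain count is then $\Lambda_k = \langle T_{\lambda}^k 1, 1\rangle_{L^2(\mu)} \le \|T_\lambda\|^k$, and the entire upper-bound/continuity analysis must be run in $L^2(\mu)$ rather than $L^\infty$. A secondary, smaller issue: distinctness of $x_0,\dots,x_k$ is not ``automatic from non-atomicity,'' since degenerate tuples such as $(x_0,x_1,x_0,x_1,\dots)$ satisfy all the gap conditions; one must bound the contribution of coincident non-adjacent vertices and compare it to the lower bound on $\Lambda_k$, exactly as the present paper does in the finite-field setting when passing from $\mathcal{N}_{\mathcal{G}}$ to $\mathcal{N}^*_{\mathcal{G}}$.
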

One can interpret this statement in terms of equilateral Euclidean distance graphs $\mathcal{G}_\lambda(A)$, defined for $A \subseteq \mathbf{R}^d$ and $\lambda \in \mathbf{R}$, where we consider the points of $A$ as vertices and connect every pair of points $x,y \in A$ with an edge exactly when $|x - y| = \lambda$.  With this notion, \cref{chainsTheorem} states that for every compact $A \subseteq \mathbf{R}^d$ with $\dim(A)>\frac{d+1}{2}$ and for arbitrarily large $k \in \mathbf{N}$, there exists an open interval of distances $\lambda \in \mathbf{R}$ for which $\mathcal{G}_\lambda(A)$ contains a path of length $k$.  Under the stronger hypothesis $\dim(A)>\frac{d+3}{2}$, Greenleaf, the first listed author and Pramanik~\cite{greenleaf17} demonstrate an open interval of distances $\lambda \in \mathbf{R}$ for which $\mathcal{G}_\lambda(A)$ contains cycles of length $2k$.  Positive results for cycles of length 3 within $\mathcal{G}_\lambda(A)$ in $\mathbf{R}^4$, corresponding to the vertices of equilateral triangles, were recently obtained~\cite{iosevich16}, but there seems to be a gap in the literature for the general case of arbitrarily long odd length cycles with equilateral edge lengths.  Additional results on locating more general distance graph structure within sets of large Hausdorff dimension have been obtained by Chatzikonstantinou, the first listed author, Mkrtchyan and Pakianathan~\cite{chatz17}.

Our goal here is to illustrate a related approach for locating distance graph structure within large subsets of $\mathbf{F}_q^d$, the vector space of dimension $d$ over a finite field with odd characteristic.  For $v,w \in \mathbf{F}_q^d$, we consider their usual dot product 
\[
	v \cdot w := \sum_{j = 1}^d v_jw_j
\]
and refer to $|v|^2 := v \cdot v$ as the \emph{length} of $v$ and $|v - w|^2$ as the \emph{distance} between $v$ and $w$.  While this notion of distance is certainly not a metric, this setup provides a useful model setting for the Falconer distance problem and its variants.  For instance, the first listed author and Rudnev~\cite{iosevich07} demonstrated that every $A \subseteq \mathbf{F}_q^d$ with $|A| > 2q^{\frac{d + 1}{2}}$ determines all distances, in the sense that $\{|x - y|^2 : x,y \in A\} = \mathbf{F}_q$.  An analogue to Wolff's result in this setting was first achieved by Chapman, Erdo\u{g}an, Hart, the first listed author and Koh~\cite{chapman12}, who showed that when $q \equiv 3 \mod{4}$, every $A \subseteq \mathbf{F}_q^2$ with $|A| \geq q^{4/3}$  determines at least a constant proportion of $q$ distances.  This was shown to further hold when $q \equiv 1 \mod{4}$ by Bennett, Hart, the first listed author, Pakianathan and Rudnev~\cite{bennett17} through a group action method.  Analogous problems for locating point configurations with prescribed distance structure have been studied by several authors.  Many of these results show that large sets determine isometric copies of a positive proportion of a class of point configurations (for instance, \cite{bennett17, chapman12}), but here we will show that large sets determine isometric copies of \emph{all} point configurations of a fixed class (as was done in \cite{parshall17, vinh12}).

We find it convenient to organize distance structure within subsets $A \subseteq \mathbf{F}_q^d$ in the language of graph theory.  We call a graph $\mathcal{G} = (V,E)$ a \emph{distance graph} when to each edge $e \in E$ there is some associated nonzero length $\lambda_{e} \in \mathbf{F}_q^*$.  We call $X \subseteq \mathbf{F}_q^d$ an \emph{isometric copy} of $\mathcal{G}$ when there exists a distance preserving bijection $\varphi : V \rightarrow X$ where for every $v,w \in V$ connected by an edge $e \in E$, $|\varphi(v) - \varphi(w)|^2 = \lambda_e$.  Our main result is the following.

\begin{thm}\label{introMain} Let $n, t \in \mathbf{N}$, and let $A \subseteq \mathbf{F}_q^d$ with $|A| \geq 12n^2 q^{\frac{d - 1}{2} + t}$.  Then $A$ contains an isometric copy of every distance graph with $n$ vertices and maximum vertex degree $t$. \end{thm}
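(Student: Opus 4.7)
The plan is a greedy vertex-by-vertex construction. Fix any enumeration $v_1, \dots, v_n$ of $V(\mathcal{G})$, and for each $k$ let $I_k \subseteq \{1, \dots, k-1\}$ index the neighbors of $v_k$ appearing earlier in the enumeration; the maximum-degree hypothesis gives $|I_k| \leq t$. Suppose inductively that distinct points $x_1, \dots, x_{k-1} \in A$ already realize an isometric copy of the subgraph induced on $v_1, \dots, v_{k-1}$. The step is then to locate $x_k \in A \setminus \{x_1, \dots, x_{k-1}\}$ satisfying $|x_k - x_i|^2 = \lambda_{v_iv_k}$ for every $i \in I_k$.

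To count the candidate $x_k$, let $S_\lambda$ denote the indicator of $\{z \in \mathbf{F}_q^d : |z|^2 = \lambda\}$ and set
\[
    N_k := \sum_{y \in A}\prod_{i \in I_k} S_{\lambda_{v_iv_k}}(y - x_i).
\]
Expanding each sphere indicator by the additive-character identity $S_\lambda(z) = q^{-1}\sum_{s \in \mathbf{F}_q}\chi(s(|z|^2 - \lambda))$ and completing the square in $y$, one separates $N_k$ into (i) a main term $|A|/q^{|I_k|} \geq |A|/q^t$ from the all-zero character tuple; (ii) error contributions proportional to $\widehat{1_A}$ evaluated at nonzero frequencies of the form $2\sum_i s_i x_i$ with $T := \sum_i s_i = 0$; and (iii) error contributions of the form $\sum_{y \in A}\chi(T|y|^2 + \cdots)$ with $T \neq 0$, which are quadratic Gauss sums weighted by $1_A$. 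The spherical Fourier bound $|\widehat{S_\lambda}(\xi)| \leq 2q^{(d-1)/2}$ for $\xi \neq 0$, combined with Plancherel for $\widehat{1_A}$ and completion-of-the-square in the Gauss kernel, controls (ii) and (iii); the dimensional hypothesis $d \geq 2t$ gives enough room to maintain $\{x_i : i \in I_k\}$ in affinely independent position throughout the induction, which rules out the most dangerous ``fully degenerate'' modes in (ii) (those with $\sum s_i x_i = 0$ for a nontrivial $(s_i)$), that would otherwise contribute a spurious factor of $|A|$ rather than $\sqrt{|A|}$.

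With the Fourier error suitably controlled, one obtains $N_k \geq |A|/q^t - \mathrm{err}_k$ where $\mathrm{err}_k$ is of a size that the numerical hypothesis $|A| \geq 12 n^2 q^{(d-1)/2 + t}$ forces to be dominated by $|A|/q^t$ even after one subtracts the at most $n$ previously placed points that must be avoided; iterating for $k = 1, \dots, n$ then completes the embedding. The principal obstacle is establishing the sharp Fourier bound on the error: one has to extract the exponent $(d-1)/2$ per spherical constraint (rather than $d/2$, which a blunt Cauchy--Schwarz against the Gauss kernel would give) by exploiting the completing-the-square identity in the style of Iosevich--Rudnev, and also has to propagate affine independence of the relevant subconfigurations through the inductive construction, which is feasible precisely because $d \geq 2t$ leaves $\geq t$ unused dimensions at every step so that the added non-degeneracy constraint on $x_k$ excises only a lower-order set of candidates.
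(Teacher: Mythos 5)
Your greedy, vertex-by-vertex scheme has a gap at its core inductive step, and the gap is not repairable in the form you state it: it is simply false that a set $A$ with $|A| \geq 12n^2q^{\frac{d-1}{2}+t}$ must contain, for every \emph{fixed} $x_1 \in A$ and every $\lambda \in \mathbf{F}_q^*$, a point $y \in A$ with $|y - x_1|^2 = \lambda$. Take $A = S \cup \{c\}$ where $S = \{y : |y-c|^2 = \mu\}$ is a sphere of radius $\mu \neq \lambda$ about $c$; then $|A| \approx q^{d-1}$, which meets the size hypothesis once $d \geq 2t+2$ and $q$ is large relative to $n$, yet no point of $A$ lies at distance $\lambda$ from $x_1 = c$. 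So already the step $k=2$ of your induction can fail. The failure shows up quantitatively in your Fourier expansion: for a pinned center the error terms involve (in normalized terms) $\sum_{\xi \neq 0} |\widehat{S_\lambda}(\xi)|\,|\widehat{1_A}(\xi)|$, and since $\widehat{1_A}$ appears only \emph{once}, the best available bound via the sphere decay and Cauchy--Schwarz is of order $q^{\frac{d-1}{2}}|A|^{1/2}$ against a main term of $|A|/q$, which would require $|A| \gtrsim q^{d+1}$ --- impossible. The affine-independence bookkeeping you propose only addresses degenerate frequency combinations among several centers and does not touch this obstruction (the counterexample has a single center). The estimate that actually holds, \cref{distancesTheorem}, wins precisely because it averages over \emph{both} endpoints of the constrained pair, so that $\widehat{1_A}$ appears twice and Plancherel gives $\sum_\xi |\widehat{1_A}(\xi)|^2 = q^{-d}|A|$; a pinned version is a genuinely harder, and in this generality false, statement.

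This is exactly why the paper does not fix a partial embedding and extend it. Instead it runs an induction on \emph{edges}: the global normalized count $\mathcal{N}_{\mathcal{G}}(A)$ is compared to the count for $\mathcal{G}$ with one edge deleted, by applying \cref{distancesTheorem} to the two endpoints of that edge with weights $f,g$ equal to $1_A$ times the product of the remaining sphere factors at those endpoints. The maximum-degree hypothesis enters through $\|f\|_2^2 \leq q^{t-1}\,\E_{x} f(x)$, which is the source of the exponent $t - \frac{d+1}{2}$, and distinctness of the vertices is handled at the end by subtracting the degenerate coincident configurations from the global count. If you want to retain a vertex-by-vertex flavour, you would have to prove that \emph{most} partial embeddings extend, which is an averaging statement over the positions of all previously placed vertices and leads back to a global count of the paper's type.
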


The main analytic input into Theorem \ref{introMain} is Theorem \ref{distancesTheorem} below. An interested reader can check that the ``distance'' function ${|\cdot|}^2$ can be replaced by any non-degenerate quadratic form using the results of \cite{iosevich07} and \cite{bennett17}. We stick to the ``Euclidean distance'' in this context for the sake of notational clarity. 

In particular, this allows us to locate arbitrarily large configurations of maximum degree $t$ within large subsets of $\mathbf{F}_q^{2t}$ provided $q$ is taken sufficiently large with respect to the desired number of vertices $n$.  For instance, we recover nontrivial results for all cycle graphs of length $n$ in dimensions $d \geq 4$, which is new.  Previous results of this type typically require the dimension $d$ of $\mathbf{F}_q^d$ to depend on the number of sought after vertices.  Vinh~\cite{vinh12} has proven that for a constant $C$ depending only upon $n$, if $A \subseteq \mathbf{F}_q^d$ with $|A| > C q^{\frac{d - 3}{2} + n}$, then $A$ contains an isometric copy of every complete graph on $n$ vertices.  While our methods differ, \cref{introMain} recovers Vinh's exponent of $\frac{d - 3}{2} + n$ for complete graphs on $n$ vertices.  Vinh's result yields a nontrivial exponent for every distance graph by considering an embedding into a complete graph.  \cref{introMain} yields an improvement over this exponent whenever the maximum degree $t$ is strictly less than $n - 1$.

There are two cases where \cref{introMain} does not even meet the quantitative information provided by previous similar results.  Bennett~et~al.~\cite{bennett16-paths} showed that if $A \subseteq \mathbf{F}_q^d$ with $|A| > 4(n + 1) q^{\frac{d + 1}{2}}$, then $A$ contains an isometric copy of every path with $n$ vertices.  This is nontrivial in dimensions $d \geq 2$, while applying \cref{introMain} for paths is unfortunately only nontrivial for $d \geq 4$.  A more careful application of our methods yields a result for paths in dimensions $d \geq 3$, but we do not pursue this.  Additionally, the second listed author~\cite{parshall17} has proven results for complete graphs on $n$ vertices that are nontrivial in dimensions $d \geq n$ provided one is willing to impose technical conditions on the edge distances; we instead opt for a result uniform in nonzero edge distances.

Our strategy is to proceed by an edge deletion induction.  This relies on a functional distance counting theorem, \cref{distancesTheorem}, which previously appeared in the work of Bennett~et~al.~\cite{bennett16-paths}.  As this is our main tool, we record its proof after setting notation and recalling some facts about exponential sums over finite fields.  Before proving a more quantitative form of \cref{introMain} in the final section, we demonstrate our method with the small case of equilateral triangles; this helps to highlight why our methods are constrained by the maximum vertex degree.

%
%

\section{Preliminaries}

Let $\chi : \mathbf{F}_q \rightarrow \mathbf{C}$ denote the canonical additive character.  More precisely, if $p$ is the characteristic of $\mathbf{F}_q$ and $\operatorname{Tr} : \mathbf{F}_q \rightarrow \mathbf{F}_p$ is the usual trace function, we define for $a \in \mathbf{F}_q$
\[
	\chi(a) := \exp\Big(\frac{2\pi i \operatorname{Tr}(a)}{p}\Big).
\]
We find it convenient to use averaging notation for $f : \mathbf{F}_q^d \rightarrow \mathbf{C}$ of
\[
	\E_x f(x) := q^{-d} \sum_{x \in \mathbf{F}_q^d} f(x),
\]
and we will condense multiple averages $\E_{x_1} \E_{x_2} \cdots \E_{x_n}$ as $\E_{x_1, x_2, \ldots, x_n}$.  With this notation, the familiar orthogonality relation for $\chi$ and $y \in \mathbf{F}_q^d$ takes the form
\[
	\E_x \chi(x \cdot y) = \begin{cases} 1 & \text{if } y = 0 \\ 0 & \text{if } y \in \mathbf{F}_q^d \setminus \{0\}. \end{cases}
\]
For $\lambda \in \mathbf{F}_q$, $x \in \mathbf{F}_q^d$ we define the normalized indicator function
\[
	\sigma_\lambda(x) := \begin{cases} q & \text{if } |x|^2 = \lambda \\ 0 & \text{otherwise.} \end{cases}
\]
This plays the role of the surface measure of the sphere of radius $\lambda$.  For convenience we perform the standard calculation that shows $\sigma_\lambda$ is essentially $L^1$-normalized, with error tending to zero as $q \rightarrow \infty$ provided $d \geq 2$.
\begin{lem}\label{sigmaNormalizedLemma}
	For any $\lambda \in \mathbf{F}_q^*$,
	\begin{equation}\label{sigmaNormalized}
	|\E_x \sigma_\lambda(x) - 1| \leq q^{-\frac{d-1}{2}}.
\end{equation}	
\end{lem}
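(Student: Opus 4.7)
The plan is to expand the indicator function $\mathbf{1}[|x|^2 = \lambda]$ as an average of additive characters, swap the order of summation, and extract the main term from the $s=0$ frequency. Concretely, I would start from
\[
    \sigma_\lambda(x) = q\,\mathbf{1}[|x|^2 = \lambda] = \sum_{s \in \mathbf{F}_q} \chi\bigl(s(|x|^2 - \lambda)\bigr),
\]
take the expectation in $x$, and separate the $s = 0$ term. Since $\E_x \chi(0) = 1$, the identity becomes
\[
    \E_x \sigma_\lambda(x) - 1 = \sum_{s \in \mathbf{F}_q^*} \chi(-s\lambda)\,\E_x \chi(s|x|^2),
\]
and the problem reduces to bounding this character sum.

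Next I would use that $\E_x \chi(s|x|^2) = \prod_{j=1}^d \E_{x_j} \chi(sx_j^2) = q^{-d} g(s)^d$, where $g(s) = \sum_{y \in \mathbf{F}_q} \chi(sy^2)$ is the standard Gauss sum. For $s \neq 0$ the identity $g(s) = \eta(s)\,g(1)$, where $\eta$ is the quadratic character on $\mathbf{F}_q^*$, together with $|g(1)| = \sqrt{q}$, reorganizes the error as
\[
    \E_x \sigma_\lambda(x) - 1 = q^{-d}\,g(1)^d \sum_{s \in \mathbf{F}_q^*} \eta(s)^d\,\chi(-s\lambda).
\]
The subtle point — and the main obstacle — is that a naive triangle inequality here only yields $(q-1)q^{-d/2}$, which is too weak by a factor of $\sqrt{q}$; genuine cancellation in the $s$-sum is needed.

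To extract that cancellation I would split into the parity of $d$. If $d$ is even, $\eta(s)^d = 1$, so orthogonality and $\lambda \neq 0$ give $\sum_{s\neq 0} \chi(-s\lambda) = -1$, yielding an error of magnitude $q^{-d/2}$, which is well within the claimed bound. If $d$ is odd, the remaining sum $\sum_{s \neq 0} \eta(s)\,\chi(-s\lambda)$ is, after the substitution $s \mapsto -s/\lambda$ and using $\eta$-multiplicativity, a Gauss sum of the quadratic character, hence has modulus exactly $\sqrt{q}$. Combining this with $|g(1)^d| = q^{d/2}$ produces the error bound $q^{-d} \cdot q^{d/2} \cdot q^{1/2} = q^{-(d-1)/2}$, matching the stated estimate. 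Both cases thus satisfy \eqref{sigmaNormalized}, and the odd-dimensional case is tight.
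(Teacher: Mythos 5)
Your proposal is correct and follows essentially the same route as the paper's proof: expand $\sigma_\lambda$ via orthogonality, evaluate $\E_x\chi(s|x|^2)$ as a power of a Gauss sum times $\eta(s)^d$, and split on the parity of $d$ so that the remaining $s$-sum is either $-1$ or another Gauss sum. The only cosmetic difference is that you derive the Gauss sum evaluation by factoring over coordinates rather than citing it, which is a perfectly fine substitute.
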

\begin{proof}
	By orthogonality, we have
	\begin{align*}
		\E_x \sigma_\lambda(x) &= \E_x \sum_{\ell \in \mathbf{F}_q} \chi(\ell |x|^2 - \ell \lambda)\\
		&= \sum_{\ell \in \mathbf{F}_q} \chi(-\ell \lambda) \E_x \chi(\ell |x|^2)\\
		&= 1 + \sum_{\ell \in \mathbf{F}_q^*} \chi(-\ell \lambda) \E_x \chi(\ell |x|^2).
	\end{align*}
	Introducing the quadratic character $\eta : \mathbf{F}_q^* \rightarrow \{-1,1\}$ defined for $a \in \mathbf{F}_q^*$ by
	\[
		\eta(a) := \begin{cases} 1 & \text{if } a \text{ is a square in } \mathbf{F}_q^* \\ -1 & \text{otherwise,} \end{cases}
	\]
	and the Gaussian sum
	\[
		G(\chi,\eta) := \sum_{a \in \mathbf{F}_q^*} \chi(a)\eta(a),
	\]
	it follows from \cite[Theorem 5.33]{lidl97} that we have
	\[
		\E_x \chi(\ell |x|^2) = q^{-d} G(\chi,\eta)^d \eta(\ell)^d .
	\]
	Together with our computation above,
	\[
		|\E_x \sigma_\lambda(x) - 1| \leq q^{-d} |G(\chi,\eta)|^d \Big|\sum_{\ell \in \mathbf{F}_q^*} \chi(-\ell \lambda) \eta(\ell)^d\Big|.
	\]
	As $\lambda \in \mathbf{F}_q^*$, we can reindex in $\ell$ for
	\[
		|\E_x \sigma_\lambda(x) - 1| \leq q^{-d} |G(\chi,\eta)|^d \Big|\sum_{\ell \in \mathbf{F}_q^*} \chi(\ell) \eta(\ell)^d\Big|
	\]
	It is not hard \cite[Theorem 5.15]{lidl97} to show $|G(\chi,\eta)| = q^{1/2}$.  When $d$ is even, the sum in $\ell$ amounts to $-1$ (by orthogonality of $\chi$), and when $d$ is odd, the sum in $\ell$ contributes another Gaussian sum.  That is, we could slightly improve \eqref{sigmaNormalized} in even dimensions; in any case we can conclude \eqref{sigmaNormalized}.
\end{proof}

\section{A Functional Distance Theorem}

The first listed author and Rudnev~\cite{iosevich07} showed not only that large sets determine all distances, but that distances within large subsets equidistribute.  We will use a functional version of their result, which appears in the work of Bennett et al.~\cite[Theorem 2.1]{bennett16-paths}.  More precisely, with the $L^2$ norm of $f : \mathbf{F}_q^d \rightarrow \mathbf{C}$ normalized as
\[
	\| f \|_2 := \Big(\E_x |f(x)|^2 \Big)^{1/2},
\]
our main distance counting tool is the following.
\begin{thm}[\cite{bennett16-paths}]\label{distancesTheorem} For any $f,g : \mathbf{F}_q^d \rightarrow \mathbf{C}$ and $\lambda \in \mathbf{F}_q^*$,
\[
	\Big|\E_{x,y} f(x)g(y)\sigma_\lambda(x - y) - \E_x f(x) \E_y g(y)\Big| \leq 2q^{-\frac{d-1}{2}} \| f \|_2 \| g \|_2.
\]
\end{thm}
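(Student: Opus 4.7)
The plan is to Fourier expand $\sigma_\lambda$ and then apply Parseval and Cauchy--Schwarz to reduce the bilinear estimate to a uniform pointwise bound on the Fourier transform of $\sigma_\lambda$. With the normalization $\widehat{f}(\xi) := \E_x f(x)\chi(-x\cdot\xi)$, Fourier inversion reads $\sigma_\lambda(z) = \sum_\xi \widehat{\sigma_\lambda}(\xi)\chi(z\cdot\xi)$ and Parseval reads $\sum_\xi |\widehat{f}(\xi)|^2 = \|f\|_2^2$. Substituting the inversion formula into the bilinear form and separating the $x$ and $y$ averages gives
\[
	\E_{x,y} f(x) g(y) \sigma_\lambda(x-y) = \sum_\xi \widehat{\sigma_\lambda}(\xi)\widehat{f}(-\xi)\widehat{g}(\xi).
\]
The $\xi = 0$ contribution is $\widehat{\sigma_\lambda}(0)\,\E_x f(x)\,\E_y g(y)$, so the quantity I must bound equals $\sum_\xi m(\xi)\widehat{f}(-\xi)\widehat{g}(\xi)$ with $m(\xi) := \widehat{\sigma_\lambda}(\xi) - \mathbf{1}_{\xi=0}$. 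Applying Cauchy--Schwarz in $\xi$ and then Parseval to each factor reduces the theorem to proving the uniform estimate $\sup_\xi |m(\xi)| \leq 2q^{-\frac{d-1}{2}}$.

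At $\xi = 0$ the inequality $|m(0)| = |\widehat{\sigma_\lambda}(0) - 1| \leq q^{-\frac{d-1}{2}}$ is precisely \cref{sigmaNormalizedLemma}. For $\xi \neq 0$ I plan to redo the proof of that lemma with the extra factor $\chi(-x\cdot\xi)$ inserted: writing $\sigma_\lambda(x) = \sum_\ell \chi(\ell(|x|^2 - \lambda))$ gives
\[
	\widehat{\sigma_\lambda}(\xi) = \sum_{\ell \in \mathbf{F}_q}\chi(-\ell\lambda)\,\E_x\chi(\ell|x|^2 - x\cdot\xi),
\]
whose $\ell = 0$ summand vanishes by orthogonality of $\chi$ when $\xi \neq 0$. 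For each $\ell \neq 0$, completing the square in $x$ yields $\E_x\chi(\ell|x|^2 - x\cdot\xi) = \chi(-|\xi|^2/(4\ell))\,\E_y \chi(\ell|y|^2)$, and invoking the identity $\E_y \chi(\ell|y|^2) = q^{-d}G(\chi,\eta)^d \eta(\ell)^d$ from the proof of \cref{sigmaNormalizedLemma} together with $|G(\chi,\eta)| = q^{1/2}$ reduces matters to controlling
\[
	\Big|\sum_{\ell \in \mathbf{F}_q^*} \chi\bigl(-\ell\lambda - |\xi|^2/(4\ell)\bigr)\eta(\ell)^d\Big|.
\]
This is a Kloosterman sum when $d$ is even and a Sali\'e-type variant when $d$ is odd; standard bounds on such sums (Weil in the Kloosterman case, an elementary Gauss-sum evaluation in the Sali\'e case) give magnitude at most $2q^{1/2}$, yielding $|\widehat{\sigma_\lambda}(\xi)| \leq 2q^{-\frac{d-1}{2}}$ and completing the proof.

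The only substantive input beyond the Gaussian-sum machinery of \cref{sigmaNormalizedLemma} is the square-root cancellation in the $\ell$-sum above for the generic case $|\xi|^2 \neq 0$; the isotropic case $|\xi|^2 = 0$ collapses back to the elementary Gauss-sum computation already used in the lemma, so the real work is concentrated in the appeal to the Kloosterman/Sali\'e bound.
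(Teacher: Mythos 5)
Your proposal is correct and follows essentially the same route as the paper: both arguments reduce the bilinear estimate via Plancherel and Cauchy--Schwarz to the decay of $\widehat{\sigma_\lambda}$ away from the zero frequency, which is then established by the same Gauss-sum evaluation of $\E_x\chi(\ell|x|^2 - x\cdot\xi)$ followed by the Kloosterman/Sali\'e bound. The only difference is cosmetic---you package the estimate as a uniform multiplier bound $\sup_\xi|\widehat{\sigma_\lambda}(\xi)-\mathbf{1}_{\xi=0}|\leq 2q^{-\frac{d-1}{2}}$, while the paper carries $\widehat{f}(\xi)\widehat{g}(-\xi)$ through the computation---so there is nothing further to reconcile.
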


In order to include the proof of \cref{distancesTheorem}, we recall some Fourier analytic facts here.  The \emph{Fourier transform} $\widehat{f} : \mathbf{F}_q^d \rightarrow \mathbf{C}$ for $f : \mathbf{F}_q^d \rightarrow \mathbf{C}$ is defined for $\xi \in \mathbf{F}_q^d$ by
\[
	\widehat{f}(\xi) = \E_x f(x) \chi(x \cdot \xi).
\]
The orthogonality of $\chi$ allows one to quickly conclude the Fourier inversion formula
\[
	f(x) = \sum_{\xi \in \mathbf{F}_q^d} \widehat{f}(\xi) \chi(-x \cdot \xi)
\]
and the Plancherel identity
\[
	\E_x |f(x)|^2 = \sum_{\xi \in \mathbf{F}_q^d} |\widehat{f}(\xi)|^2.
\]
While \cref{sigmaNormalizedLemma} relied only on Gaussian sums, \cref{distancesTheorem} relies on more delicate cancellation.  To be precise, for $a,b \in \mathbf{F}_q$ let us define the \emph{Kloosterman sum}
\begin{align*}
	K(a,b) := \sum_{\ell \in \mathbf{F}_q^*} \chi\Big(as + \frac{b}{s}\Big)
\end{align*}
and the \emph{Sali\'{e} sum}
\begin{align*}
	S(a,b) := \sum_{\ell \in \mathbf{F}_q^*} \chi\Big(as + \frac{b}{s}\Big) \eta(s).
\end{align*}
If either $a$ or $b$ is nonzero, both bounds $|S(a,b)| \leq 2\sqrt{q}$ and $|K(a,b)| \leq 2\sqrt{q}$ are well-known; see, for instance, \cite{iwaniec04}.  These bounds are somewhat more involved than showing $|G(\chi,\eta)| = q^{1/2}$.  While the Sali\'{e} sum can indeed be related back to a Gaussian sum, the Kloosterman sum bound relies on the fundamental work of Weil~\cite{weil48}.  With these bounds, we are ready to prove the functional distance theorem.

\begin{proof}[Proof of \cref{distancesTheorem}]
	Proceeding as in \cref{sigmaNormalizedLemma}, we apply orthogonality for
	\begin{align*}
		\E_{x,y}f(x)g(x + y)\sigma_\lambda(y) &= \E_{x,y} f(x)g(x + y) \sum_{\ell \in \mathbf{F}_q} \chi(\ell |y|^2)\\
		&= \E_x f(x) \E_y g(y) + \sum_{\ell \in \mathbf{F}_q^*} \chi(-\ell \lambda) \E_{y} \chi(\ell |y|^2) \E_x f(x)g(x + y)
	\end{align*}
	Applying Fourier inversion,
	\[
		\E_x f(x)g(x + y) = \sum_{\xi \in \mathbf{F}_q^d} \widehat{f}(\xi)\widehat{g}(-\xi)\chi(y \cdot \xi),
	\]
	so we have shown
	\[
	\E_{x,y}f(x)g(x + y)\sigma_\lambda(y) - \E_x f(x) \E_y g(y) = \sum_{\xi \in \mathbf{F}_q^d} \widehat{f}(\xi)\widehat{g}(-\xi) \sum_{\ell \in \mathbf{F}_q^*} \chi(-\ell \lambda) \E_{y} \chi(\ell |y|^2 + y \cdot \xi)
	\]
	By \cite[Theorem 5.33]{lidl97},
	\[
		\E_{y} \chi(\ell |y|^2 + y \cdot \xi) = q^{-d} G(\chi,\eta)^d \chi\Big(-\frac{|\xi|^2}{4\ell}\Big) \eta(\ell)^d
	\]
	which we combine with our previous work to obtain
	\[
		\Big|\E_{x,y}f(x)g(x + y)\sigma_\lambda(y) - \E_x f(x) \E_y g(y)\Big| \leq q^{-d/2} \sum_{\xi \in \mathbf{F}_q^d} |\widehat{f}(\xi)||\widehat{g}(-\xi)| \Big|\sum_{\ell \in \mathbf{F}_q^*} \chi\Big(-\ell \lambda - \frac{|\xi|^2}{4\ell}\Big) \eta(\ell)^d\Big|.
	\]
	As $\lambda \neq 0$, we can recognize the sum in $\ell$ as either a Kloosterman sum (when $d$ is even) or a Sali\'{e} sum (when $d$ is odd) and arrive at
	\[
	\Big|\E_{x,y}f(x)g(x + y)\sigma_\lambda(y) - \E_x f(x) \E_y g(y)\Big| \leq 2q^{-\frac{d - 1}{2}} \sum_{\xi \in \mathbf{F}_q^d} |\widehat{f}(\xi)||\widehat{g}(-\xi)|.
	\]
	Applying Cauchy-Schwarz and Plancherel,
	\[
		\sum_{\xi \in \mathbf{F}_q^d} |\widehat{f}(\xi)||\widehat{g}(-\xi)| \leq \| f \|_2 \| g \|_2,
	\]
	completing the proof.
\end{proof}

\section{Equilateral Triangles}

The rough plan is to argue that a properly normalized count of isometric copies of a particular graph $\mathcal{G}$ within sufficiently large subset $A \subseteq \mathbf{F}_q^d$ is roughly equal to a properly normalized count of isometric copies of the subgraph of $\mathcal{G}$ formed by deleting an edge.  We will illustrate our approach through the small example of counting equilateral triangles, by which we mean complete graphs on 3 vertices that are connected by edges of the same length.  This will amount to a total of three edge deletions, see the figure, and each edge deletion requires an application of \cref{distancesTheorem}, resulting in an error term.  Provided we can keep the sum of these error terms under control, we will find ourselves counting points in $A$ with no distance restrictions at all.

\begin{prop}\label{triangles} Let $\lambda \in \mathbf{F}_q^*$, $A \subseteq \mathbf{F}_q^d$ with $|A| = \alpha q^d$ and $\alpha \geq 24q^{2 - \frac{d + 1}{2}}.$ Then
\begin{equation}\label{trianglesEquation}
	\E_{x,y,z} 1_A(x)1_A(y)1_A(z) \sigma_{\lambda}(x - y) \sigma_{\lambda}(x - z) \sigma_{\lambda}(y - z) \geq \frac{1}{2}\alpha^3.
\end{equation}
\end{prop}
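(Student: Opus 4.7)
The plan is to perform three successive edge deletions, each replacing one $\sigma_\lambda$ factor by its essentially constant average of $1$ via \cref{distancesTheorem}. Fix $x$ first and set $f_x(y) := 1_A(y)\sigma_\lambda(x-y)$ and $g_x(z) := 1_A(z)\sigma_\lambda(x-z)$. Applying \cref{distancesTheorem} to the inner $y,z$ average gives
\[
\E_{y,z}f_x(y)g_x(z)\sigma_\lambda(y-z) = \Big(\E_y 1_A(y)\sigma_\lambda(x-y)\Big)^2 + E_1(x),
\]
with $|E_1(x)| \le 2q^{-(d-1)/2}\|f_x\|_2^2$ (the two $L^2$ norms being equal by symmetry of the definitions). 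Integrating against $1_A(x)$ reduces the problem to analyzing
\[
M_1 := \E_{x,y,z}1_A(x)1_A(y)1_A(z)\sigma_\lambda(x-y)\sigma_\lambda(x-z).
\]
For the second deletion, fix $z$ and apply \cref{distancesTheorem} with $f(x) = 1_A(x)\sigma_\lambda(x-z)$ and $g(y)=1_A(y)$ to peel off $\sigma_\lambda(x-y)$, producing $M_1 = \alpha \cdot \E_{x,z}1_A(x)1_A(z)\sigma_\lambda(x-z) + E_2$. A third, direct application gives $\E_{x,z}1_A(x)1_A(z)\sigma_\lambda(x-z) = \alpha^2 + E_3$ with $|E_3| \le 2q^{-(d-1)/2}\alpha$. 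The main term of $T$ collapses to $\alpha^3$ exactly.

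The main obstacle I anticipate is controlling $\E_x 1_A(x)|E_1(x)|$ tightly enough to achieve the stated threshold $\alpha \ge 24q^{2-(d+1)/2}=24q^{(3-d)/2}$. A pointwise bound $\|f_x\|_2^2 \le 2q$ (via $\sigma_\lambda^2 = q\sigma_\lambda$ and \cref{sigmaNormalizedLemma}) would be too wasteful by a factor of $\alpha^{-1}$. The remedy is to average first and reuse \cref{distancesTheorem}: since $\sigma_\lambda^2 = q\sigma_\lambda$,
\[
\E_x 1_A(x)\|f_x\|_2^2 = q\,\E_{x,y}1_A(x)1_A(y)\sigma_\lambda(x-y) \le q\bigl(\alpha^2 + 2q^{-(d-1)/2}\alpha\bigr),
\]
which is at most $2q\alpha^2$ under the hypothesis. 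This bounds the $E_1$ contribution by $4\alpha^2 q^{(3-d)/2}$, which is at most $\alpha^3/6$ precisely when $\alpha \ge 24q^{(3-d)/2}$.

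The other errors are smaller under the same hypothesis. For $E_2$, Cauchy-Schwarz in the $z$ average combined with the identity $\sigma_\lambda^2 = q\sigma_\lambda$ and one more application of \cref{distancesTheorem} yields a bound on the order of $\alpha^2 q^{(2-d)/2}$; the $E_3$ contribution is of order $\alpha^2 q^{(1-d)/2}$. Both are dominated by the $E_1$ bound since $q \ge 1$, so the first deletion governs everything. Requiring each of the three errors to be at most $\alpha^3/6$ then yields total error at most $\alpha^3/2$, establishing \eqref{trianglesEquation}.
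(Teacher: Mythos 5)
Your proposal is correct and follows essentially the same route as the paper: the same order of edge deletions, the same key device of averaging $\E_x 1_A(x)\|f_x\|_2^2 = q\,\E_{x,y}1_A(x)1_A(y)\sigma_\lambda(x-y) \leq 2q\alpha^2$ rather than bounding $\|f_x\|_2^2$ pointwise, and the same error budget of $\frac{1}{6}\alpha^3$ per deletion. The only difference is presentational (you observe that the second and third errors are dominated by the first via $q \geq 1$, where the paper tracks them explicitly), which does not change the argument.
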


\begin{proof}
	Set $f_x(y) = 1_A(y) \sigma_{\lambda}(x - y)$.  Applying \cref{distancesTheorem} for a fixed $x$,
	\[
		|\E_{y,z} f_x(y)f_x(z) \sigma_{\lambda}(y - z) - \E_y f_x(y) \E_z f_x(z)| \leq 2q^{\frac{1 - d}{2}} \| f_x \|_2^2
	\]
	Note that $f_x$ is not $L^2$-normalized, so we pick up an extra factor of $q$.  That is,
	\begin{align*}
		\| f_x \|_2^2 &= \E_y 1_A(y)\sigma_{\lambda}(x - y)^2\\
		&= q \E_y f_x(y).
	\end{align*}
	Averaging over potential vertices $x$ in $A$ and applying the triangle inequality, we see
	\[
		|\E_{x,y,z} 1_A(x) f_x(y)f_x(z) \sigma_{\lambda}(y - z) - \E_{x,y,z} 1_A(x) f_x(y) f_x(z)| \leq 2q^{\frac{3 - d}{2}} \E_x 1_A(x) \E_y f_x(y).
	\]
	Recall $f_x(y) = 1_A(y)\sigma_\lambda(x - y)$.  Since $\alpha$ is not too small, \cref{distancesTheorem} tells us
	\[
		\E_x 1_A(x) 1_A(y) \sigma_\lambda(x - y) \leq 2\alpha^2.
	\]
	Combining this with our assumption that $\alpha$ is not too small, we have shown
	\[
		|\E_{x,y,z} 1_A(x) f_x(y)f_x(z) \sigma_{\lambda_3}(y - z) - \E_{x,y,z} 1_A(x) f_x(y) f_x(z)| \leq \frac{1}{6} \alpha^3.
	\]
What we have done so far is to show that the normalized count of equilateral triangles within $A$, $\{x,y,z\} \subseteq A$ each of distance $\lambda$ from each other, is roughly equal to the normalized count of paths of length 2 within $A$, $\{x,y,z\} \subseteq A$ where both $y$ and $z$ are distant $\lambda$ from $x$, where the error amounts to a small constant proportion of the expected count of $\alpha^3$.  In other words, by deleting an edge length from our desired configuration and counting the resulting configurations, we are still counting a positive proportion of the expected number of desired configurations.
	
	We now aim to delete another edge to estimate $\E_{x,y,z} 1_A(x) 1_A(y) \sigma_\lambda(x - y) 1_A(z) \sigma_\lambda(x - z)$.  Continuing with the notation $f_z(x) = 1_A(x) \sigma_\lambda(x - z)$, we apply \cref{distancesTheorem} for a fixed $z$ to obtain
	\[
		|\E_{x,y} f_z(x) 1_A(y) \sigma_{\lambda}(x - y) - \alpha \E_x f_z(x)| \leq 2 \alpha^{1/2} q^\frac{1 - d}{2} \| f_z \|_2.
	\]
	Averaging over potential vertices $z$ in $A$ and applying the triangle inequality,
	\[
		|\E_{x,y,z} f_z(x) 1_A(y) 1_A(z) \sigma_{\lambda}(x - y) - \alpha \E_{x,z} f_z(x) 1_A(z)| \leq 2\alpha^{1/2} q^\frac{1 - d}{2} \E_z 1_A(z) \| f_z \|_2.
	\]
	Applying Cauchy-Schwarz,
	\begin{align*}
		\E_z 1_A(z) \| f_z \|_2 &\leq \alpha^{1/2} \E_z 1_A(z) \| f_z \|_2^2\\
		&= \alpha^{1/2} q^{1/2} (\E_{x,z} 1_A(x) 1_A(z) \sigma_\lambda(x - z))^{1/2}\\
		&\leq 2\alpha^{3/2} q^{1/2},
	\end{align*}
	where again we have used that $\alpha$ is not too small to productively apply \cref{distancesTheorem}.  In total,
	\[
	|\E_{x,y,z} f_z(x) 1_A(y) 1_A(z) \sigma_{\lambda}(x - y) - \alpha \E_{x,z} f_z(x) 1_A(z)| \leq 4\alpha^2 q^\frac{2 - d}{2},
	\]
	where the error here is smaller than our previous error by a factor of $q$.  In particular,
	\[
	|\E_{x,y,z} 1_A(x)1_A(y)1_A(z)\sigma_\lambda(x - y)\sigma_\lambda(x - z)\sigma_\lambda(y - z) - \alpha \E_{x,z} 1_A(x) 1_A(z) \sigma_{\lambda}(x - z)| \leq \frac{1}{3}\alpha^3.
	\]
	We have now reduced the problem of counting equilateral triangles of side length $\lambda$ within $A$ to the problem of counting pairs of points of distance $\lambda$ apart within $A$, which is exactly what \cref{distancesTheorem} does for us.  As $\alpha$ is not too small,
	\[
		|\E_{x,z} 1_A(x) 1_A(z) \sigma_\lambda(x - z) - \alpha^2| \leq 2\alpha q^\frac{1 - d}{2}
	\]
	which together with the previous reductions leads quickly to
	\[
		|\E_{x,y,z} 1_A(x)1_A(y)1_A(z)\sigma_\lambda(x - y)\sigma_\lambda(x - z)\sigma_\lambda(y - z) - \alpha^3| \leq \frac{1}{2} \alpha^3,
	\]
	from which \eqref{trianglesEquation} follows immediately.
\end{proof}

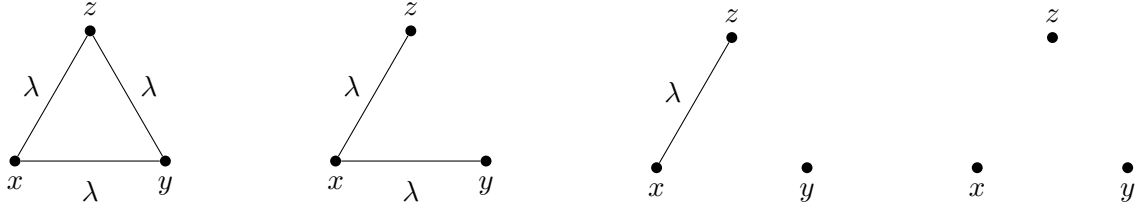
\begin{figure}

\caption{Each application of \cref{distancesTheorem} deletes an edge to count simpler configurations.}

\hspace{1em}
\begin{tikzpicture}
\node[name=X, fill=black, shape=circle, scale=.4, label=below:{$x$}] at (-1,0) {};
\node[name=Y, fill=black, shape=circle, scale=.4, label=below:{$y$}] at (1,0) {};
\node[name=Z, fill=black, shape=circle, scale=.4, label=above:{$z$}] at (0,1.732) {};

\draw (X) -- (Y);
\node[fill=none, label=below:{$\lambda$}] at (0,0) {};

\draw (X) -- (Z);
\node[fill=none, label=above left:{$\lambda$}] at (-.4,.6) {};

\draw (Y) -- (Z);
\node[fill=none, label=above right:{$\lambda$}] at (.4,.6) {};

\end{tikzpicture}
\hspace{4em}
\begin{tikzpicture}

\node[name=X, fill=black, shape=circle, scale=.4, label=below:{$x$}] at (-1,0) {};
\node[name=Y, fill=black, shape=circle, scale=.4, label=below:{$y$}] at (1,0) {};
\node[name=Z, fill=black, shape=circle, scale=.4, label=above:{$z$}] at (0,1.732) {};

\draw (X) -- (Y);
\node[fill=none, label=below:{$\lambda$}] at (0,0) {};

\draw (X) -- (Z);
\node[fill=none, label=above left:{$\lambda$}] at (-.4,.6) {};

\end{tikzpicture}
\hspace{4em}
\begin{tikzpicture}

\node[name=X, fill=black, shape=circle, scale=.4, label=below:{$x$}] at (-1,0) {};
\node[name=Y, fill=black, shape=circle, scale=.4, label=below:{$y$}] at (1,0) {};
\node[name=Z, fill=black, shape=circle, scale=.4, label=above:{$z$}] at (0,1.732) {};

\draw (X) -- (Z);
\node[fill=none, label=above left:{$\lambda$}] at (-.4,.6) {};

\end{tikzpicture}
\hspace{4em}
\begin{tikzpicture}

\node[name=X, fill=black, shape=circle, scale=.4, label=below:{$x$}] at (-1,0) {};
\node[name=Y, fill=black, shape=circle, scale=.4, label=below:{$y$}] at (1,0) {};
\node[name=Z, fill=black, shape=circle, scale=.4, label=above:{$z$}] at (0,1.732) {};

\end{tikzpicture}
\hspace{1em}

\end{figure}

In the argument above, deleting an edge via \cref{distancesTheorem} allowed us to count simpler configurations at the price of including additional error resulting from our functions $f_x$ not being $L^2$-normalized.  This is the heart of our argument, and the rest is essentially bookkeeping.  The quality of the error terms in each edge deletion above depended only upon the degrees of the vertices at the end of the chosen edge.  This should give some hope that for counting cycles of length $n$, we may have to sum more error terms (an amount depending upon $n$), but each power of $q$ appearing is at worst $q^{\frac{3 - d}{2}}$. Roughly, as we proceed through the general argument in the next section, we will see that deleting an edge between two vertices of degree at most $t$ amounts to increasing the $q^{\frac{1 - d}{2}}$ factor appearing in the error from \cref{distancesTheorem} to a factor of $q^{t + \frac{1 - d}{2}}$.  This accounts for the necessary size constraint imposed on $\alpha$.  It should be believable that the restriction to equilateral configurations, while conceptually convenient, is not particularly important since we delete one edge at a time.

It is reasonable to worry that \cref{triangles} does not guarantee us three \emph{distinct} points $\{x,y,z\} \subseteq A$ forming the desired triangle; this is required to make any conclusion about $A$ containing an isometric copy of a complete graph on 3 vertices with 3 edge lengths of $\lambda$.  It is not terribly surprising that coincidences between these vertices amount to a well-controlled error term, and we make this precise at the end of the next section in the general case.

\section{Edge Deletion Induction}

We now establish our main theorem.  For the remainder we will consider distance graphs $\mathcal{G}$ with vertices labeled as $V = \{v_1, \ldots, v_n\}$ and when $v_i$ is adjacent to $v_j$ we label their common edge $e_{i,j}$.  Recall that to each such edge we have a distance $\lambda_{e_{i,j}} \in \mathbf{F}_q^*$.  As in the simple example in the previous section, we will apply \cref{distancesTheorem} to delete a single distance constraint, an edge length, and consider the resulting subgraph.  We find it convenient to introduce the shorthand for $x \in \mathbf{F}_q^d$ of
\[
	\sigma_{i,j}(x) := \begin{cases} \sigma_{\lambda_{e_{i,j}}}(x) & \text{if } v_i \text{ and } v_j \text{ are adjacent} \\ 1 & \text{otherwise} \end{cases}
\]
to compactly for $A \subseteq \mathbf{F}_q^d$ the count of
\begin{equation}\label{edCount}
	\mathcal{N}_{\mathcal{G}}(A) := \E_{x_1, \ldots, x_n} \prod_{i = 1}^n 1_A(x_i) \prod_{j = i + 1}^{n} \sigma_{i,j}(x_i - x_j).
\end{equation}
This is \emph{almost} a normalized count for the number of (ordered) isometric copies of $\mathcal{G}$ appearing within $A$.  The only difference is the additional contribution from degenerate configurations that occur when some of the non-adjacent $x_i$ coincide.  We will account for this contribution at the end of the section.  Our main asymptotic here is:

\begin{theorem}\label{degreeAsymptotic} Let $\mathcal{G} = (V,E)$ be a distance graph with $n$ vertices, $m$ edges, and maximum degree $t$.  Let $A \subseteq \mathbf{F}_q^d$ with $|A| = \alpha q^d$ and
\begin{equation}\label{edAlphaSize}
	\alpha \geq 4mq^{t - \frac{d + 1}{2}}.
\end{equation}
Then
\begin{equation}\label{edInduction}
	\Big|\mathcal{N}_\mathcal{G}(A) - \alpha^n \Big| \leq 4 m \alpha^{n - 1} q^{t - \frac{d + 1}{2}}.
\end{equation}
\end{theorem}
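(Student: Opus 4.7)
The plan is to induct on the number of edges $m$ of $\mathcal{G}$, keeping the maximum degree bound $t$ fixed and allowing the number of vertices $n$ to vary. The base case $m = 0$ is immediate: $\mathcal{N}_\mathcal{G}(A) = \E_{x_1, \ldots, x_n} \prod_{i} 1_A(x_i) = \alpha^n$, and the right-hand side of \eqref{edInduction} is zero.

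For the inductive step, pick an edge $e_{i,j}$ of $\mathcal{G}$ and let $\mathcal{G}'$ denote $\mathcal{G}$ with $e_{i,j}$ deleted. Freezing the variables $(x_k)_{k \neq i, j}$, set
\[
	f(x_i) = 1_A(x_i) \prod_{\substack{e_{i,k} \in E \\ k \neq j}} \sigma_{i,k}(x_i - x_k), \qquad g(x_j) = 1_A(x_j) \prod_{\substack{e_{j,k} \in E \\ k \neq i}} \sigma_{j,k}(x_j - x_k),
\]
and let $C = C((x_k)_{k \neq i,j})$ collect the remaining indicators and sphere factors, none of which depends on $x_i$ or $x_j$. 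Then $\mathcal{N}_\mathcal{G}(A) - \mathcal{N}_{\mathcal{G}'}(A)$ becomes the expectation of $C$ times the bracket to which \cref{distancesTheorem} applies, yielding
\[
	|\mathcal{N}_\mathcal{G}(A) - \mathcal{N}_{\mathcal{G}'}(A)| \leq 2 q^{-\frac{d-1}{2}} \E_{(x_k)_{k \neq i,j}} C \| f \|_2 \| g \|_2.
\]

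The key computation is the $L^2$ norm: since $\sigma_\lambda^2 = q \sigma_\lambda$, we have $f^2 = q^{d_i - 1} f$, where $d_i \leq t$ is the degree of $v_i$ in $\mathcal{G}$, whence $\E_{(x_k)_{k \neq i,j}} C \| f \|_2^2 = q^{d_i - 1} \mathcal{N}_{\mathcal{G} - v_j}(A)$, and analogously for $g$. Cauchy--Schwarz in the outer expectation against $C \geq 0$ then gives
\[
	\E_{(x_k)_{k \neq i,j}} C \| f \|_2 \| g \|_2 \leq q^{\frac{d_i + d_j - 2}{2}} \bigl(\mathcal{N}_{\mathcal{G} - v_j}(A) \, \mathcal{N}_{\mathcal{G} - v_i}(A)\bigr)^{1/2}.
\]
Since $\mathcal{G} - v_i$ and $\mathcal{G} - v_j$ have strictly fewer than $m$ edges, the inductive hypothesis applies, and the size hypothesis $\alpha \geq 4m q^{t - (d+1)/2}$ forces the error term there to be at most $\alpha^{n-1}$, so $\mathcal{N}_{\mathcal{G} - v_i}(A), \mathcal{N}_{\mathcal{G} - v_j}(A) \leq 2\alpha^{n-1}$. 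Using $d_i + d_j \leq 2t$, this yields $|\mathcal{N}_\mathcal{G}(A) - \mathcal{N}_{\mathcal{G}'}(A)| \leq 4 \alpha^{n-1} q^{t - (d+1)/2}$. Combined with the inductive bound on $|\mathcal{N}_{\mathcal{G}'}(A) - \alpha^n|$ and the triangle inequality, the $4m$ of \eqref{edInduction} appears as the telescoping sum of $m$ edge-deletion errors of size $4\alpha^{n-1} q^{t-(d+1)/2}$ each.

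The main obstacle is keeping the $L^2$ bookkeeping honest: because $\sigma_\lambda$ is only essentially $L^1$-normalized rather than $L^2$-normalized, each sphere factor in $f$ contributes an extra $q$ when squared, and it is precisely this accumulation --- bounded by $q^{d_i} q^{d_j} \leq q^{2t}$ --- competing against the $q^{-(d-1)}$ gain of \cref{distancesTheorem} that dictates the exponent $t - \frac{d+1}{2}$ and the associated size hypothesis on $\alpha$. A secondary subtlety is simply verifying that the auxiliary $\mathcal{N}_{\mathcal{G} - v_i}(A)$ can be absorbed cleanly via the inductive hypothesis; this is what motivates carrying out the induction on $m$ alone while letting $n$ be arbitrary, so that vertex deletions remain within the scope of the hypothesis.
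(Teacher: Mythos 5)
Your proposal is correct and follows essentially the same edge-deletion induction as the paper: delete one edge via \cref{distancesTheorem}, convert the $L^2$ norms to $L^1$ averages at a cost of $q^{t-1}$ per factor, and control the resulting subgraph counts with the inductive hypothesis and Cauchy--Schwarz. The only (cosmetic) differences are your base case $m=0$ versus the paper's $m=1$, and that your Cauchy--Schwarz produces the vertex-deleted counts $\mathcal{N}_{\mathcal{G}-v_i}$, $\mathcal{N}_{\mathcal{G}-v_j}$ where the paper uses the induced subgraph on the remaining $n-2$ vertices together with the edge-deleted graph $\mathcal{G}'$; both are handled identically by the induction hypothesis.
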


\begin{proof}

We proceed by induction on the number of edges, $m$.  In the case $m = 1$, note the maximum degree is $t = 1$ and we may as well assume we only have $n = 2$ vertices, since each isolated vertex contributes a factor of $\alpha$ to $\mathcal{N}_\mathcal{G}(A)$.  Then \eqref{edInduction} is actually weaker by a constant factor than the content of \cref{distancesTheorem} applied with $f = g = 1_A$, so we have one base case already established.

We assume that the theorem has been established for graphs with at most $m - 1$ edges.  It is obvious, but crucial for us, that the maximum degree of subgraphs of $\mathcal{G}$ remains at most $t$ so that we can proceed to apply \eqref{edInduction} to the subgraph of $\mathcal{G}$ formed by removing an edge.  For notational convenience, there is no harm in assuming that $v_1$ and $v_2$ are adjacent.  Set
\[
	f(x_1) := 1_A(x_1) \prod_{j = 3}^{n} \sigma_{1,j} (x_1 - x_j),
\]
and, for $2 \leq i \leq n$,
\[
	g_i(x_i) := 1_A(x_i) \prod_{j = i + 1}^{n} \sigma_{i,j}(x_i - x_j).
\]
Note that $f$ and the $g_i$ together account for all edges of $\mathcal{G}$ except for $e_{1,2}$, which we will be deleting.  With this notation, we can rearrange our count \eqref{edCount} as
\begin{equation}\label{rearrangedCount}
	\mathcal{N}_{\mathcal{G}}(A) = \E_{x_3, \ldots, x_n} \prod_{i = 3}^n g_i(x_i) \E_{x_1, x_2} f(x_1) g_2(x_2) \sigma_{1,2}(x_1 - x_2).
\end{equation}
Applying \cref{distancesTheorem} to the inner average,
\begin{equation}\label{applyDistances}
\E_{x_1, x_2} f(x_1) g_2(x_2) \sigma_{1,2}(x_1 - x_2) = \E_{x_1} f(x_1) \E_{x_2} g_2(x_2) + \mathcal{E}
\end{equation}
where
\[
	|\mathcal{E}| \leq 2q^\frac{1 - d}{2} \| f \|_2 \| g_2 \|_2.
\]
Since $f$ and $g_2$ amount to normalized indicator functions, we can essentially replace these $L^2$ norms with simple averages provided we are careful about normalization.  Since they each account for at most $t - 1$ edges, we have
\begin{align*}
	\| f \|_2^2 &= \E_{x_1} f(x_1)^2\\
	&\leq q^{t - 1} \E_{x_1} f(x_1)
\end{align*}
and similarly $\| g_2 \|^2_2 \leq q^{t - 1} \E_{x_2} g(x_2)$.  Hence,
\[
	|\mathcal{E}| \leq 2 q^{t - \frac{d + 1}{2}} \Big(\E_{x_1} f(x_1)\Big)^{1/2}\Big(\E_{x_2} g(x_2)\Big)^{1/2}.
\]
Together with \eqref{rearrangedCount} and \eqref{applyDistances}, we see
\[
	\Big|\mathcal{N}_\mathcal{G}(A) - \E_{x_1, \ldots, x_n} f(x_1) \prod_{i = 2}^{n} g_i(x_i) \Big| \leq 2 q^{t - \frac{d + 1}{2}} \E_{x_3, \ldots, x_n} \prod_{i = 3}^n g_i(x_i) \| f \|_1^{1/2} \| g_2 \|_1^{1/2}.
\]
This essentially says that, up to an error, the count for copies of $\mathcal{G}$ within $A$ is the same as that for the subgraph of $\mathcal{G}$ formed by deleting the edge $e_{1,2}$.  We invoke our induction hypothesis to observe
\[
	\Big|\E_{x_1, \ldots, x_n} f(x_1) \prod_{i = 2}^{n} g_i(x_i) - \alpha^n\Big| \leq 4(m - 1) \alpha^{n - 1} q^{t - \frac{d + 1}{2}},
\]
in which case we have shown
\begin{equation}\label{edAlmost}
	\Big|\mathcal{N}_\mathcal{G}(A) - \alpha^n\Big| \leq 4(m - 1) \alpha^{n - 1} q^{t - \frac{d + 1}{2}} + 2 q^{t - \frac{d + 1}{2}} \E_{x_3, \ldots, x_n} \prod_{i = 3}^n g_i(x_i) \| f \|_1^{1/2} \| g_2 \|_1^{1/2}.
\end{equation}
Now we need only show that we can bring these two error terms in line.  Applying Cauchy-Schwarz,
\[
	\E_{x_3, \ldots, x_n} \prod_{i = 3}^n f_i(x_i) \| f \|_1^{1/2} \| g_2 \|_1^{1/2} \leq \Big( \E_{x_3, \ldots, x_n} \prod_{i = 3}^n g_i(x_i) \Big)^{1/2} \Big(\E_{x_1, \ldots, x_n} f(x_1) \prod_{i = 2}^n g_i(x_i)\Big)^{1/2}.
\]
By our induction hypothesis, and our assumption that $\alpha$ is not too small, we have both
\begin{align*}
	\E_{x_3, \ldots, x_n} \prod_{i = 3}^n g_i(x_i) &\leq 2\alpha^{n - 2}, \text{ and}\\
	\E_{x_1, \ldots, x_n} f(x_1) \prod_{i = 2}^n g_i(x_i) &\leq 2\alpha^n.
\end{align*}
In particular, together with \eqref{edAlmost}, this allows us to conclude \eqref{edInduction} as desired.
\end{proof}

To extract \cref{introMain}, we must argue that at least one of the many configurations counted by $\mathcal{N}_\mathcal{G}(A)$ is genuine, in the sense that it is made up of distinct vertices.  As one might expect, we will in fact show that when $A$ is large enough, many of these configurations are genuine.  Note that \cref{introMain} follows immediately from the following.

\begin{thm}
Let $n,t \in \mathbf{N}$ and let $A \subseteq \mathbf{F}_q^d$ with $|A| = \alpha q^d$ with $\alpha \geq 12n^2 q^{t - \frac{d + 1}{2}}$.  Then $A$ contains at least $\frac{1}{2}|A|^n q^{-m}$ isometric copies of every distance graph with $n$ vertices, $m$ edges, and maximum vertex degree $t$.
\end{thm}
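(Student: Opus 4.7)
The plan is to deduce this refinement of \cref{introMain} from \cref{degreeAsymptotic} by carefully accounting for degenerate tuples. First observe that $q^{nd-m}\mathcal{N}_{\mathcal{G}}(A)$ is precisely the number of ordered $n$-tuples $(x_1,\ldots,x_n)\in A^n$ satisfying $|x_i - x_j|^2 = \lambda_{e_{i,j}}$ for every edge $e_{i,j}$: each of the $m$ factors $\sigma_{i,j}$ contributes $q$ exactly on such tuples. Since every $\lambda_{e_{i,j}} \in \mathbf{F}_q^*$, adjacent vertices are automatically mapped to distinct points, and the only possible obstruction to genuineness is a coincidence $x_i = x_j$ across a \emph{non-adjacent} pair.

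For the main term I would use that $m \leq \binom{n}{2} \leq n^2/2$, so the hypothesis $\alpha \geq 12n^2 q^{t-(d+1)/2}$ forces $4mq^{t-(d+1)/2} \leq \tfrac{1}{6}\alpha$, and \cref{degreeAsymptotic} gives $\mathcal{N}_{\mathcal{G}}(A) \geq \tfrac{5}{6}\alpha^n$. Equivalently, at least $\tfrac{5}{6}|A|^n q^{-m}$ ordered tuples (genuine or otherwise) satisfy every edge distance. To control the degenerate contribution I would union-bound over the at most $\binom{n}{2}$ non-adjacent pairs $v_i, v_j$. For a fixed such pair, every valid tuple with $x_i = x_j$ descends, upon forgetting $x_j$, to an ordered $(n-1)$-tuple in $A^{n-1}$ satisfying every edge of the vertex-deleted subgraph $\mathcal{G} - v_j$. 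Crucially, $\mathcal{G}-v_j$ has at most $m$ edges and still has maximum degree at most $t$, so \cref{degreeAsymptotic} applies under the same hypothesis on $\alpha$ and bounds its $\mathcal{N}$-count by $2\alpha^{n-1}$. This yields at most $2\alpha^{n-1}q^{(n-1)d - m + d_j} \leq 2\alpha^{n-1}q^{(n-1)d - m + t}$ degenerate tuples from this pair.

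Summing over the $\binom{n}{2}$ non-adjacent pairs and dividing by $|A|^n q^{-m}$ bounds the total degenerate fraction by $n^2 q^t/|A| \leq \tfrac{1}{12}q^{-(d-1)/2} \leq \tfrac{1}{12}$. Subtracting from the main bound, the count of genuine ordered isometric copies is at least $\bigl(\tfrac{5}{6}-\tfrac{1}{12}\bigr)|A|^n q^{-m} \geq \tfrac{1}{2}|A|^n q^{-m}$, as required.

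The key subtlety, and in my view the only real obstacle, is the degenerate step: the more natural move of \emph{merging} $v_i$ and $v_j$ produces a graph whose maximum degree could double to $2t$, placing it outside the regime where \cref{degreeAsymptotic} is productive under the hypothesis on $\alpha$. Passing instead to $\mathcal{G}-v_j$ keeps the max-degree bound at $t$ and so closes the argument; this is precisely why the factor $12n^2$ in the hypothesis (as opposed to something depending on $2t$) suffices.
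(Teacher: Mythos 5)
Your proposal is correct and follows essentially the same route as the paper: both handle a coincidence $x_i = x_j$ by deleting the vertex $v_j$, applying \cref{degreeAsymptotic} to the vertex-deleted subgraph (which keeps at most $m$ edges and maximum degree $t$), and using $\deg(v_j) \leq t$ to absorb the extra factor of $q^{d_j} \leq q^t$, then union-bounding over the $O(n^2)$ possible coincidences. The only differences are cosmetic — you work with unnormalized counts and an injective forgetful map where the paper bounds the inner sum over $x_n \in \{x_1,\dots,x_{n-1}\}$ pointwise by $(n-1)q^t$ inside the normalized average, and you additionally note that adjacent vertices cannot coincide since edge lengths are nonzero.
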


\begin{proof}
Let $\mathcal{G}$ be a distance graph with $n$ vertices, $m$ edges, and maximum vertex degree $t$ with the same labeling scheme as before.  We introduce the restricted average
\[
	\E_{x_1, \ldots, x_n}^* := q^{-nd} \sum_{\substack{x_1, \ldots, x_n \in \mathbf{F}_q^d \\ x_1, \ldots, x_n \text{ distinct}}}
\]
and consider
\[
	\mathcal{N}_\mathcal{G}^*(A) := \E_{x_1, \ldots, x_n}^* \prod_{i = 1}^n 1_A(x_i) \prod_{j = i + 1}^{n} \sigma_{i,j}(x_i - x_j),
\]
which only counts genuine isometric copies of $\mathcal{G}$ appearing within $A$.  If we are considering a configuration within $A$ that is detected by $\mathcal{N}_\mathcal{G}$, but not by $\mathcal{N}^*_\mathcal{G}(A)$, then some vertices coincide.  We will show that the contribution when $x_n$ is equal to one of $x_1, \ldots, x_{n - 1}$, the contribution is negligible.  In particular, we will bound
\[
	\mathcal{E}_n := \E_{x_1, \ldots, x_{n - 1}} \prod_{i = 1}^{n - 1} 1_A(x_i) \prod_{j = i + 1}^{n - 1} \sigma_{i,j}(x_i - x_j) q^{-d} \sum_{x_n \in \{x_1, \ldots, x_{n - 1}\}} 1_A(x_n) \prod_{i = 1}^{n - 1} \sigma_{i,n}(x_i - x_n).
\]
Of course, we can express this as
\[
	\mathcal{E}_n = q^{-d} \E_{x_1, \ldots, x_{n - 1}} \prod_{i = 1}^{n - 1} 1_A(x_i) \prod_{j = i + 1}^{n - 1} \sigma_{i,j}(x_i - x_j) \sum_{j = 1}^{n - 1} \prod_{i = 1}^{n - 1}\sigma_{i,n}(x_i - x_j).
\]
When the innermost sum is nonzero, its contribution is at least bounded by $(n - 1)q^t$, since $v_n$ has degree at most $t$.  As we are assuming $\alpha$ is not too small, we can invoke \cref{degreeAsymptotic} to conclude
\[
	\mathcal{E}_n \leq 2(n - 1)\alpha^{n - 1} q^{t - d}.
\]
Applying a symmetric argument for coincidences involving vertices other than $x_n$, we have shown
\[
	|\mathcal{N}_\mathcal{G}(A) - \mathcal{N}^*_\mathcal{G}(A)| \leq 2n^2 \alpha^{n - 1} q^{t - d}.
\]
Combining this with \cref{degreeAsymptotic} with our size requirement on $\alpha$ yields
\[
	|\mathcal{N}^*_\mathcal{G}(A) - \alpha^n| \leq 6n^2 \alpha^{n - 1} q^{t - \frac{d + 1}{2}}
\]
which implies that $\mathcal{N}^*_\mathcal{G}(A) \geq \frac{1}{2} \alpha^n$.  The result follows by then clearing normalizations.
\end{proof}

\providecommand{\bysame}{\leavevmode\hbox to3em{\hrulefill}\thinspace}
\providecommand{\MR}{\relax\ifhmode\unskip\space\fi MR }
\providecommand{\MRhref}[2]{%
  \href{http://www.ams.org/mathscinet-getitem?mr=#1}{#2}
}
\providecommand{\href}[2]{#2}

\end{document}